\newtheorem{theor}{Theorem}[section] \newtheorem{lem}{Lemma}[section]
\theoremstyle{definition} 
\newtheorem{ex}{Example}[section]
\theoremstyle{remark} \newtheorem{rem}{Remark}[section]
\newcommand{\pn}{\par\noindent} \newcommand{\pmn}{\par\medskip\noindent}
\begin{document}
\title{Two dynamical systems in the space of triangles}
\author{Yury Kochetkov}
\date{}
\begin{abstract} Let $M$ be the space of triangles, defined up to shifts,
rotations and dilations. We define two maps $f:M\to M$ and $g:M\to
M$. The map $f$ corresponds to a triangle of perimeter $\pi$ the
triangle with angles numerically equal to edges of the initial
triangle. The map $g$ corresponds to a triangle of perimeter
$2\pi$ the triangle with \emph{exterior} angles numerically equal
to edges of the initial triangle. For $p\in M$ the sequence
$\{p,f(p),f(f(p)),\ldots\}$ converges to the equilateral triangle
and the sequence $\{p,g(p),g(g(p)),\ldots\}$ converges to the
"degenerate triangle" with angles $(0,0,\pi)$. In Supplement an
analogous problem about inscribed-circumscribed quadrangles is
discussed.
\end{abstract}

\email{yukochetkov@hse.ru, yuyukochetkov@gmail.com} \maketitle

\section{Introduction}
\pn Dynamical systems in space of triangles are objects of an
interest for many years. For example in \cite{KS} and \cite{Un}
the map is studied that corresponds to a triangle its pedal
triangle. And in \cite{Ha} the map is studied, where a new
triangle is constructed from cevians of the given one. \pmn We
adopt another approach: we interchange roles of edges and angles.
Namely, to a triangle with perimeter $\pi$ we correspond the
triangle whose angles are numerically equal to edges of the
initial triangle, and to a triangle with perimeter $2\pi$ we
correspond the triangle whose \emph{exterior} angles are
numerically equal to edges of the initial one. \pmn Let $M$ be the
space triangles defined up to shifts, rotations and dilations.
Thus, an element of $M$ is a triple of positive numbers with the
sum $\pi$. Triples $(\alpha,\beta,\gamma)$,
$(\beta,\gamma,\alpha)$ and $(\gamma,\alpha,\beta)$ are the same,
but mirror symmetric triangles are different elements in $M$. We
denote by $\alpha$ the smallest angle of a triangle and by
$\gamma$ --- the biggest, thus, $\alpha\leqslant\beta
\leqslant\gamma$. \pmn We will consider two maps $f:M\to M$ and
$g:M\to M$. Let $p$ be a triangle of perimeter $\pi$, with angles
$\alpha,\beta,\gamma$ and let $\alpha',\beta'$ and $\gamma'$ be
lengths of edges opposite to angles $\alpha,\beta$ and $\gamma$,
respectively. Then $f(p)=q$, where $q=(\alpha',\beta', \gamma')$.
Let now $p$ be the same element in $M$, but with perimeter $2\pi$.
Let $a,b,c$ be exterior angles, adjacent to $\alpha$, $\beta$ and
$\gamma$ and $a',b',c'$ be lengths of edges, opposite to
$\alpha,\beta$ and $\gamma$, respectively. Then $g(p)=r$, where
$r=(\pi-a',\pi-b',\pi-c')$, i.e. $(a',b',c')$ are exterior angles
of the new triangle.\pmn
\begin{rem} The triangle inequality is not valid for values of
interior angles, but valid for values of exterior angles. Hence,
$f$ is not a bijection, but $g$ is.\end{rem} \pmn {\bf Theorem
2.1.} \emph{Let $p\in M$, then sequence
$\{p,f(p),f(f(p)),\ldots\}$ converges to the equilateral
triangle.} \pmn {\bf Theorem 4.1.} \emph{Let $p\in M$, then the
sequence $\{p,g(p),g(g(p)),\ldots\}$ converges to the point
$(0,0,\pi)$, which does not belong to $M$, but belong to its
boundary.}\pmn In Supplement we consider the map $h$ that
correspond to a inscribed-circumscribed quadrangle of perimeter
$2\pi$ the inscribed-circumscribed quadrangle which angles are
numerically equal to edges of the initial quadrangle. \pmn {\bf
Theorem 5.1.} \emph{Let $Q$ be an inscribed-circumscribed
quadrangle then the sequence $\{Q,h(Q),h(h(Q)),\ldots\}$ converges
to the "degenerate" quadrangle with angles $0,0,\pi,\pi$.}

\section{Properties of the map $f$}
\pn Let us remind that in a triangle the bigger edge lies opposite
the bigger angle. Thus, $\alpha'\leqslant\beta'\leqslant\gamma'$.
\begin{lem}
$$\alpha'=\frac{\pi\cdot\sin(\alpha)}{\sin(\alpha)+\sin(\beta)+\sin(\gamma)},
\quad
\beta'=\frac{\pi\cdot\sin(\beta)}{\sin(\alpha)+\sin(\beta)+\sin(\gamma)},
\quad
\gamma'=\frac{\pi\cdot\sin(\gamma)}{\sin(\alpha)+\sin(\beta)+\sin(\gamma)}.$$
\end{lem}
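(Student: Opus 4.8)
The plan is to derive the formulas directly from the law of sines. Write $p=(\alpha,\beta,\gamma)$ with $\alpha,\beta,\gamma>0$ and $\alpha+\beta+\gamma=\pi$; up to similarity there is exactly one triangle with these interior angles. By the law of sines the lengths of the edges opposite $\alpha$, $\beta$, $\gamma$ are proportional to $\sin(\alpha)$, $\sin(\beta)$, $\sin(\gamma)$ respectively, so there is a constant $k>0$ with $\alpha'=k\sin(\alpha)$, $\beta'=k\sin(\beta)$, $\gamma'=k\sin(\gamma)$.

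It remains to pin down $k$ from the normalization in the definition of $f$: the triangle $p$ is taken with perimeter $\pi$, hence $\alpha'+\beta'+\gamma'=\pi$, i.e. $k\,(\sin(\alpha)+\sin(\beta)+\sin(\gamma))=\pi$. Since each of $\alpha,\beta,\gamma$ lies in the open interval $(0,\pi)$, the three sines are strictly positive, so $\sin(\alpha)+\sin(\beta)+\sin(\gamma)>0$ and we may divide to obtain $k=\pi/(\sin(\alpha)+\sin(\beta)+\sin(\gamma))$. Substituting this value of $k$ back into the three expressions for $\alpha',\beta',\gamma'$ gives precisely the displayed formulas.

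I do not expect any real obstacle here: the statement is essentially the law of sines together with a rescaling so that the perimeter equals $\pi$. The only points worth a sentence of justification are that the denominator never vanishes --- which is immediate because $p\in M$ forces all angles strictly between $0$ and $\pi$ --- and that the resulting triple again lies in $M$, i.e. its entries are positive and sum to $\pi$, the latter being exactly the equation used to determine $k$. The ordering $\alpha'\leqslant\beta'\leqslant\gamma'$ noted just before the lemma then follows from the standard fact, already recalled in the text, that in a triangle the larger edge lies opposite the larger angle.
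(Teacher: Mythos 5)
Your proposal is correct and follows essentially the same route as the paper: the law of sines gives edge lengths proportional to $\sin(\alpha)$, $\sin(\beta)$, $\sin(\gamma)$, and the perimeter normalization $\alpha'+\beta'+\gamma'=\pi$ fixes the proportionality constant. You merely spell out the details (positivity of the denominator, membership in $M$) that the paper leaves implicit.
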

\begin{proof} It is enough to note that in triangle lengths of
edges are proportional to sines of opposite angles. As perimeter
must be $\pi$, it remains to find the proportionality coefficient.
\end{proof}
\begin{lem} $\alpha'\geqslant\alpha$ and equality is satisfied only when
$\alpha=\frac\pi3$,\end{lem}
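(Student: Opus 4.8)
The plan is to convert the claimed inequality into a statement about the function $\sin(t)/t$. By Lemma 2.1 together with the identity $\alpha+\beta+\gamma=\pi$, the inequality $\alpha'\geqslant\alpha$ is equivalent, after clearing the positive denominator $\sin(\alpha)+\sin(\beta)+\sin(\gamma)$ and dividing by $\alpha>0$, to
$$\frac{\sin(\alpha)}{\alpha}\geqslant\frac{\sin(\alpha)+\sin(\beta)+\sin(\gamma)}{\alpha+\beta+\gamma}.$$
The right-hand side is a weighted average of the three numbers $\sin(\alpha)/\alpha$, $\sin(\beta)/\beta$, $\sin(\gamma)/\gamma$ with the strictly positive weights $\alpha,\beta,\gamma$, so it lies between the least and the greatest of them. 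Hence it suffices to prove that $\sin(\alpha)/\alpha$ is the \emph{largest} of the three, i.e. that $t\mapsto\sin(t)/t$ is decreasing on the interval where the angles live.

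First I would note that positivity of all three angles forces $0<\alpha\leqslant\beta\leqslant\gamma<\pi$, so all three ratios are evaluated on $(0,\pi)$. Next I would check that $\varphi(t)=\sin(t)/t$ is strictly decreasing on $(0,\pi)$: its derivative has the sign of $t\cos(t)-\sin(t)$, and this last function vanishes at $0$ while its own derivative equals $-t\sin(t)<0$ on $(0,\pi)$, so $t\cos(t)-\sin(t)<0$ there and $\varphi$ is strictly decreasing. Since $\alpha\leqslant\beta\leqslant\gamma$, this gives $\varphi(\alpha)\geqslant\varphi(\beta)\geqslant\varphi(\gamma)$, and substituting into the weighted-average remark yields
$$\varphi(\alpha)\geqslant\frac{\alpha\,\varphi(\alpha)+\beta\,\varphi(\beta)+\gamma\,\varphi(\gamma)}{\alpha+\beta+\gamma},$$
which is exactly the displayed inequality; hence $\alpha'\geqslant\alpha$.

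For the equality case, a weighted average of $\varphi(\alpha),\varphi(\beta),\varphi(\gamma)$ with strictly positive weights equals the maximum value $\varphi(\alpha)$ only when all three values coincide. By strict monotonicity of $\varphi$ this forces $\alpha=\beta=\gamma$, and with $\alpha+\beta+\gamma=\pi$ this means $\alpha=\beta=\gamma=\frac\pi3$; conversely that triple obviously gives $\alpha'=\frac\pi3=\alpha$. I do not expect a genuine obstacle here — the only point that needs care is the rigorous proof that $\varphi$ is \emph{strictly} (and not merely weakly) decreasing, since it is precisely the strictness that pins the equality case down to the equilateral triangle.
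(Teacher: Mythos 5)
Your argument is correct, but it follows a genuinely different route from the paper. You reduce $\alpha'\geqslant\alpha$ to the statement that $\frac{\sin\alpha}{\alpha}$ dominates the weighted average $\frac{\alpha\varphi(\alpha)+\beta\varphi(\beta)+\gamma\varphi(\gamma)}{\alpha+\beta+\gamma}$ with $\varphi(t)=\frac{\sin t}{t}$, and then use the strict monotonicity of $\varphi$ on $(0,\pi)$ (proved correctly via the sign of $t\cos t-\sin t$); the equality analysis is also sound, since a positively weighted average attains the maximal value $\varphi(\alpha)$ only if all three values coincide, which by strictness forces the equilateral triangle, i.e. $\alpha=\frac\pi3$. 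The paper instead proceeds by direct trigonometric manipulation: it rewrites the denominator as $4\sin\frac{\alpha+\beta}{2}\cos\frac\alpha2\cos\frac\beta2$, invokes the chord-type bound $\sin x>\frac{3x}{\pi}$ on $\left(0,\frac\pi6\right)$ applied to $x=\frac\alpha2$ (using $\alpha\leqslant\frac\pi3$), and finishes with the elementary estimate $2\sin\frac{\alpha+2\beta}{2}+2\sin\frac\alpha2\leqslant3$. Your version is more conceptual and arguably cleaner on the equality case, and it has the added benefit that the very same weighted-average observation immediately yields Lemma 2.3 as well (since $\varphi(\gamma)$ is the smallest of the three values, the average is at least $\varphi(\gamma)$, giving $\gamma'\leqslant\gamma$), whereas the paper proves that lemma by a separate case analysis; the paper's computation, on the other hand, stays entirely within elementary identities and needs no auxiliary monotonicity lemma.
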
 \begin{proof}
\begin{multline*}
\frac{\pi\cdot\sin(\alpha)}{\sin(\alpha)+\sin(\beta)+\sin(\gamma)}=
\frac{\pi\cdot\sin(\alpha)}{\sin(\alpha)+\sin(\beta)+\sin(\alpha+\beta)}=\\
=\dfrac{\pi\cdot\sin(\alpha)}
{2\sin\frac{\beta+\alpha}{2}\cos\frac{\beta-\alpha}{2}+
2\sin\frac{\beta+\alpha}{2}\cos\frac{\beta+\alpha}{2}}=
\dfrac{\pi\cdot\sin(\alpha)}{4\sin\frac{\beta+\alpha}{2}\cos\frac
\alpha 2\cos\frac \beta 2}=\dfrac{\pi\sin\frac\alpha 2}
{2\sin\frac{\beta+\alpha}{2}\cos\frac \beta 2}.\end{multline*}
If $0<x<\frac\pi6$, then $\sin(x)>\frac{3x}{\pi}$. Hence,
$$\dfrac{\pi\sin\frac\alpha 2}
{2\sin\frac{\beta+\alpha}{2}\cos\frac \beta 2}\geqslant
\dfrac{3\alpha}{4\sin\frac{\beta+\alpha}{2}\cos\frac \beta 2}\,,$$
with equality only when $\alpha=\frac\pi3$. It is enough to prove that
$$\dfrac{3\alpha}{4\sin\frac{\beta+\alpha}{2}\cos\frac \beta
2}\geqslant\alpha\Leftrightarrow 3\geqslant
4\sin\frac{\beta+\alpha}{2}\cos\frac \beta 2=2\sin\frac{\alpha+
2\beta}{2}+2\sin\frac\alpha 2.$$ Now it remains to note that the
first summand is not greater, than 2, and the second is not
greater, than 1 (because $\alpha\leqslant\frac\pi3$). \end{proof}
\begin{lem} $\gamma'\leqslant\gamma$ and equality is satisfied only when
$\gamma=\frac\pi3$. \end{lem}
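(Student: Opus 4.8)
The plan is to simplify $\gamma'$ by the same trigonometry used for the previous lemma. From $\sin(\alpha)+\sin(\beta)+\sin(\gamma)=4\sin\frac{\alpha+\beta}{2}\cos\frac\alpha2\cos\frac\beta2$ and $\sin(\gamma)=\sin(\alpha+\beta)=2\sin\frac{\alpha+\beta}{2}\cos\frac{\alpha+\beta}{2}$ together with $\cos\frac{\alpha+\beta}{2}=\sin\frac\gamma2$, Lemma 2.1 gives
\[\gamma'=\frac{\pi\sin\frac\gamma2}{2\cos\frac\alpha2\cos\frac\beta2}=\frac{\pi\sin\frac\gamma2}{\cos\frac{\beta-\alpha}{2}+\sin\frac\gamma2},\]
so $\gamma'\leqslant\gamma$ is equivalent to $(\pi-\gamma)\sin\frac\gamma2\leqslant\gamma\cos\frac{\beta-\alpha}{2}$. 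The right-hand side is a decreasing function of $\beta-\alpha$ while the left-hand side does not depend on it, so for each fixed value of $\gamma$ it is enough to verify the inequality for the largest admissible value of $\beta-\alpha$.

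I would then split into two cases. If $\gamma\geqslant\frac\pi2$, then $\pi-\gamma\leqslant\frac\pi2$ and, since $\alpha>0$, $\beta-\alpha<\pi-\gamma$, whence $\cos\frac{\beta-\alpha}{2}>\cos\frac{\pi-\gamma}{2}=\sin\frac\gamma2$ and $(\pi-\gamma)\sin\frac\gamma2\leqslant\frac\pi2\sin\frac\gamma2<\gamma\cos\frac{\beta-\alpha}{2}$; the inequality is strict. If $\frac\pi3\leqslant\gamma<\frac\pi2$, the fact that $\gamma$ is the biggest angle gives $\beta\leqslant\gamma$, hence $\alpha\geqslant\pi-2\gamma$ and $\beta-\alpha\leqslant3\gamma-\pi$; substituting the extreme triangle $\alpha=\pi-2\gamma,\ \beta=\gamma$, for which $\cos\frac{\beta-\alpha}{2}=\cos\frac{3\gamma-\pi}{2}=\sin\frac{3\gamma}{2}$, reduces everything to
\[\gamma'\leqslant\frac{\pi\sin\frac\gamma2}{\sin\frac{3\gamma}{2}+\sin\frac\gamma2}=\frac{\pi\sin\frac\gamma2}{2\sin(\gamma)\cos\frac\gamma2}=\frac{\pi}{2(1+\cos\gamma)}\leqslant\gamma,\]
i.e.\ to the one-variable inequality $\psi(\gamma):=2\gamma(1+\cos\gamma)\geqslant\pi$ for $\gamma\in[\frac\pi3,\frac\pi2]$.

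The main obstacle is this last inequality, which is tight at both ends ($\psi(\frac\pi3)=\psi(\frac\pi2)=\pi$), so no rough estimate suffices. I would handle it by computing $\psi''(\gamma)=-4\sin(\gamma)-2\gamma\cos(\gamma)<0$ on $(\frac\pi3,\frac\pi2)$, so $\psi'$ is strictly decreasing there; since $\psi'(\frac\pi3)>0>\psi'(\frac\pi2)$, the function $\psi$ first increases and then decreases, and therefore $\psi\geqslant\pi$ on the whole interval with equality only at the two endpoints. For the equality clause, $\gamma'=\gamma$ is possible only in the second case, with $\beta=\gamma$ and $\psi(\gamma)=\pi$, so $\gamma\in\{\frac\pi3,\frac\pi2\}$; the value $\gamma=\frac\pi2$ would force $\alpha=0$, which is excluded, while $\gamma=\frac\pi3$ forces $\alpha=\beta=\gamma=\frac\pi3$ (again because $\gamma$ is the biggest angle), and for the equilateral triangle $\gamma'=\frac\pi3$ indeed. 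Apart from the tightness of $\psi\geqslant\pi$, the rest is routine bookkeeping of the cases and of the strictness.
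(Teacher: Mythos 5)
Your argument is correct, and its skeleton coincides with the paper's: for fixed $\gamma$ you maximize $\gamma'$ over $\beta-\alpha$, split into $\gamma\geqslant\frac\pi2$ (where the supremum $\beta-\alpha\to\pi-\gamma$ gives the strict bound $\gamma'<\frac\pi2\leqslant\gamma$) and $\frac\pi3\leqslant\gamma<\frac\pi2$ (where the extreme is $\beta=\gamma$, $\alpha=\pi-2\gamma$, yielding $\gamma'\leqslant\frac{\pi}{2(1+\cos\gamma)}$) — these are exactly the paper's two cases and extremal configurations. The genuine difference is the last step of the second case. The paper closes with the chain $\frac{\pi}{2\cos\gamma+2}\leqslant\frac\pi3\leqslant\gamma$, but $\frac{\pi}{2\cos\gamma+2}$ is increasing in $\gamma$ and runs from $\frac\pi3$ up to $\frac\pi2$ on $[\frac\pi3,\frac\pi2)$, so the intermediate bound by $\frac\pi3$ holds only at $\gamma=\frac\pi3$ and the chain as printed does not prove the claim for $\gamma>\frac\pi3$. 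What is actually needed is the comparison with $\gamma$ itself, i.e.\ your inequality $\psi(\gamma)=2\gamma(1+\cos\gamma)\geqslant\pi$, which is tight at both ends $\frac\pi3$ and $\frac\pi2$ and therefore cannot be dispatched by a crude constant bound; your concavity argument ($\psi''<0$, $\psi'(\frac\pi3)>0>\psi'(\frac\pi2)$, endpoint values both $\pi$) settles it cleanly, and your bookkeeping of the equality case (forcing $\beta=\gamma$ and $\gamma\in\{\frac\pi3,\frac\pi2\}$, with $\frac\pi2$ excluded because it would give $\alpha=0$) is also more explicit than the paper's. So your proposal not only follows the paper's route but supplies the correct finishing argument where the paper's displayed inequality is flawed; the only cost is the small amount of calculus on $\psi$, which is unavoidable given the double tightness.
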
 \begin{proof} As
$$\gamma'=\frac{\pi\cdot\sin(\gamma)}{\sin(\alpha)+\sin(\beta)+
\sin(\gamma)}=\dfrac{\pi\cdot\sin(\gamma)}
{2\sin\frac{\pi-\gamma}{2}\cdot\cos\frac{\beta-\alpha}{2}+
\sin(\gamma)}\,,$$ then $\gamma'$ is maximal when the difference
$\beta-\alpha$ is maximal. If $\gamma\geqslant\frac\pi2$, then
this difference is maximal for $\alpha=0$ and $\beta=\pi-\gamma$.
But then
$$\gamma'<\frac{\pi\cdot\sin(\gamma)} {2\sin\frac{\pi-\gamma}{2}
\cdot\cos\frac{\pi-\gamma}{2}+\sin(\gamma)}\leqslant
\frac\pi2\leqslant\gamma.$$ If $\frac\pi3\leqslant
\gamma<\frac\pi2$, then the difference is maximal when
$\beta=\gamma$ and $\alpha=\pi-2\gamma$. But then
$$\gamma'=\frac{2\pi\cdot\sin\frac\gamma2\cdot\cos\frac\gamma2}
{2\cdot\cos\frac\gamma2\cdot\cos\frac{3\gamma-\pi}{2}+
2\cdot\sin\frac\gamma2\cdot\cos\frac\gamma2}=
\frac{\pi\cdot\sin\frac\gamma2}{\sin\frac{3\gamma}{2}+
\sin\frac\gamma2}=\frac{\pi}{\cos(\gamma)+
2\cdot\cos^2\frac\gamma2+1}=\frac{\pi}{2\cos(\gamma)+2} \leqslant
\frac\pi3\leqslant\gamma.$$ \end{proof}
\begin{lem} The point $(\frac\pi3,\frac\pi3,\frac\pi3)$ is a
stationary attracting point of the map $f$. \end{lem}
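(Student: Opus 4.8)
The plan is to treat the two assertions separately. That $(\pi/3,\pi/3,\pi/3)$ is stationary is immediate from Lemma 2.1: with $\alpha=\beta=\gamma=\pi/3$ all three sines coincide, so $\alpha'=\beta'=\gamma'=\pi\sin(\pi/3)/\bigl(3\sin(\pi/3)\bigr)=\pi/3$ and $f$ fixes the equilateral triangle.

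For the \emph{attracting} part I would linearize $f$ at this point. Since $M$ is the $2$-dimensional simplex $\{\alpha+\beta+\gamma=\pi\}$, with tangent plane $T=\{(u,v,w):u+v+w=0\}$, and $f$ preserves it, the object of interest is the restriction to $T$ of the ambient differential $Df$. Writing $S=\sin\alpha+\sin\beta+\sin\gamma$ and differentiating the formula $\alpha'=\pi\sin\alpha/S$ of Lemma 2.1 in $\alpha,\beta,\gamma$ (treated momentarily as independent), I would evaluate at $\alpha=\beta=\gamma=\pi/3$. By the full symmetry of the formula in the three angles, the resulting $3\times 3$ Jacobian has all diagonal entries equal to some $a$ and all off-diagonal entries equal to some $b$; a direct computation gives $a=2\pi\sqrt3/27$ and $b=-\pi\sqrt3/27$. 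Note that $a+2b=0$, exactly as forced by the identity $\alpha'+\beta'+\gamma'\equiv\pi$, which puts $(1,1,1)$ in the kernel.

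Reading off the spectrum is then immediate: this matrix annihilates $(1,1,1)$ (the direction transverse to $M$) and acts on $T$ as multiplication by the scalar $a-b=\pi\sqrt3/9=\pi/(3\sqrt3)$, whose absolute value is about $0.60<1$. Hence $Df|_T$ is a linear contraction, so in a suitable metric $f$ maps some neighbourhood of $(\pi/3,\pi/3,\pi/3)$ in $M$ into itself as a contraction; by the standard linearization argument (Hartman--Grobman, or directly the contraction-mapping principle) the equilateral triangle is an attracting fixed point. I do not expect a real obstacle here: the only thing to watch is the bookkeeping of working on the simplex rather than in $\mathbb R^3$, i.e. discarding the transverse eigenvalue $0$; the derivative evaluation itself is routine, and one may note that it is consistent with Lemmas 2.2 and 2.3, which already show that under $f$ the smallest angle does not decrease and the largest does not increase.
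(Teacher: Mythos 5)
Your proposal is correct and is in substance the same argument as the paper's: the paper linearizes $f$ at $(\frac\pi3,\frac\pi3,\frac\pi3)$ by a first-order perturbation $\alpha=\frac\pi3+x\varepsilon$, etc., with $x+y+z=0$, and finds the same contraction factor $\frac{\pi}{3\sqrt3}<1$ that you obtain as the eigenvalue $a-b$ of the Jacobian restricted to the tangent plane. Your version just packages the computation as a symmetric circulant Jacobian rather than a direct perturbation, and your values $a=\frac{2\pi\sqrt3}{27}$, $b=-\frac{\pi\sqrt3}{27}$ check out.
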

\begin{proof} Let
$$\alpha=\frac\pi 3+x\varepsilon,\,\beta=\frac\pi 3+y\varepsilon,\,
\gamma=\frac\pi 3+z\varepsilon,\quad x^2+y^2+z^2=1,\quad
x+y+z=0.$$
then in the first approximation
$$\alpha'=\frac\pi 3+\frac{x\,\pi\varepsilon}{3\sqrt 3},\,
\beta'=\frac\pi 3+\frac{y\,\pi\varepsilon}{3\sqrt 3},\, \gamma'=
\frac\pi 3+\frac{z\,\pi\varepsilon}{3\sqrt 3}\,.$$  It remains to
note that $\frac{\pi}{3\sqrt 3}<1$. \end{proof} \pn The above
statements prove the theorem.
\begin{theor} Let $p\in M$, then the sequence $\{p,f(p),f(f(p)),\ldots\}$
converges to the point $(\frac\pi3,\frac\pi3,\frac\pi3)$. \end{theor}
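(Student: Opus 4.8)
The plan is to exploit the monotonicity furnished by Lemmas 2.2 and 2.3. Write $p_n=(\alpha_n,\beta_n,\gamma_n)$ for the $n$-th term of the sequence, so that $p_0=p$, $p_{n+1}=f(p_n)$, and $\alpha_n\leqslant\beta_n\leqslant\gamma_n$ for every $n$. By Lemma 2.2 the sequence $(\alpha_n)$ is nondecreasing, and it is bounded above by $\frac\pi3$ (the smallest angle of any triangle is at most $\frac\pi3$); by Lemma 2.3 the sequence $(\gamma_n)$ is nonincreasing, and it is bounded below by $\frac\pi3$. Hence both converge, say $\alpha_n\to a\leqslant\frac\pi3$ and $\gamma_n\to c\geqslant\frac\pi3$, and therefore $\beta_n=\pi-\alpha_n-\gamma_n$ converges to $b:=\pi-a-c$. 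Thus $p_n\to P=(a,b,c)$.

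The first thing to check is that $P$ lies in $M$, and not merely in its closure. Since $(\alpha_n)$ is nondecreasing we have $a\geqslant\alpha_0>0$; since $(\gamma_n)$ is nonincreasing we have $c\leqslant\gamma_0<\pi$; and $b\geqslant a>0$ because $\beta_n\geqslant\alpha_n$ for all $n$. Hence $a,b,c>0$ with $a+b+c=\pi$, i.e.\ $P\in M$. I expect this to be the main subtlety of the argument: the map $f$ need not extend continuously to the boundary of $M$, so the monotonicity is used not only to force convergence but also to keep the limit away from that boundary. (Note that Lemma 2.4 by itself gives only local, not global, convergence, so it cannot replace this step.)

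Next I would pass to the limit in the recursion $p_{n+1}=f(p_n)$. By Lemma 2.1 the map $f$ is given on $M$ by the ratio whose denominator $\sin(\alpha)+\sin(\beta)+\sin(\gamma)$ is strictly positive there (each angle lies in $(0,\pi)$), so $f$ is continuous on $M$, in particular at $P$. Letting $n\to\infty$ in $p_{n+1}=f(p_n)$ then yields $P=f(P)$, so $P$ is a fixed point of $f$.

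It remains to identify $P$. Since $f(P)=P$, the smallest coordinate of $P$ is left unchanged by $f$; by the equality clause of Lemma 2.2 this forces $a=\frac\pi3$. Then $b\geqslant a=\frac\pi3$ and $c\geqslant b\geqslant\frac\pi3$, while $b+c=\pi-a=\frac{2\pi}{3}$, so $b=c=\frac\pi3$. Therefore $P=(\frac\pi3,\frac\pi3,\frac\pi3)$ and the sequence converges to the equilateral triangle. (Lemma 2.4 adds that this fixed point is attracting, with the iteration ultimately contracting by a factor close to $\frac{\pi}{3\sqrt3}<1$, which gives geometric-rate convergence but is not needed for the bare statement.)
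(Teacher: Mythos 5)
Your proof is correct and follows essentially the same route as the paper, which simply asserts that Lemmas 2.1--2.4 "prove the theorem": you use the monotonicity and equality clauses of Lemmas 2.2 and 2.3, plus continuity of $f$ from Lemma 2.1, exactly as intended. Your write-up merely supplies the details the paper leaves implicit (boundedness by $\frac\pi3$, the limit staying inside $M$, passing to the limit to get a fixed point), and correctly observes that Lemma 2.4 is not needed for the bare convergence statement.
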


\section{Properties of the map $g$}
\pn The reasoning here will be in terms of exterior angles. Let
$p=(\alpha,\beta,\gamma)\in M$ and $a=\pi-\alpha$, $b=\pi-\beta$
and $c=\pi-\gamma$ --- values of exterior angles. In what follows
we will use notations $a',b',c'$ instead of
$\alpha',\beta',\gamma'$ and we will assume that $a\leqslant b
\leqslant c$.
\begin{lem}
$$a'=\dfrac{2\pi\cdot\sin(a)}{\sin(a)+\sin(b)+\sin(c)},\,
b'=\dfrac{2\pi\cdot\sin(b)}{\sin(a)+\sin(b)+\sin(c)},\,
c'=\dfrac{2\pi\cdot\sin(c)}{\sin(a)+\sin(b)+\sin(c)}\,.$$
\end{lem}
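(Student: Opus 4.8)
The plan is to follow verbatim the argument used for Lemma 2.1, the only change being a rewriting of sines and a different value of the perimeter. First I would invoke the law of sines in the triangle $p$: the length of the edge opposite the interior angle $\alpha$ is proportional to $\sin\alpha$, and similarly for $\beta$ and $\gamma$. Thus $a':b':c'=\sin\alpha:\sin\beta:\sin\gamma$, so there is a constant $\lambda>0$ with $a'=\lambda\sin\alpha$, $b'=\lambda\sin\beta$, $c'=\lambda\sin\gamma$.

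Next I would re-express the right-hand sides through the exterior angles. Since $a=\pi-\alpha$, we have $\sin\alpha=\sin(\pi-a)=\sin a$, and likewise $\sin\beta=\sin b$ and $\sin\gamma=\sin c$. Hence $a'=\lambda\sin a$, $b'=\lambda\sin b$, $c'=\lambda\sin c$. Finally, for the map $g$ the perimeter of $p$ equals $2\pi$, so $a'+b'+c'=2\pi$; this pins down the proportionality coefficient, $\lambda(\sin a+\sin b+\sin c)=2\pi$, i.e. $\lambda=\dfrac{2\pi}{\sin a+\sin b+\sin c}$. Substituting this value of $\lambda$ into the three expressions above yields the claimed formulas.

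There is essentially no obstacle here: the only step requiring a moment's attention is the identity $\sin(\pi-\theta)=\sin\theta$ that converts interior angles into exterior ones, and after that the statement reduces to the law of sines together with the normalization of the perimeter to $2\pi$, exactly parallel to Lemma 2.1 (where the perimeter was $\pi$).
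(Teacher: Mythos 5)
Your proof is correct and follows exactly the paper's route: the paper's own argument is simply the observation that $\sin(\alpha)=\sin(a)$, $\sin(\beta)=\sin(b)$, $\sin(\gamma)=\sin(c)$ combined with the law-of-sines-plus-perimeter argument of Lemma 2.1, now normalized to perimeter $2\pi$. Nothing is missing.
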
  \begin{proof} It is enough to mention that $\sin(\alpha)=
\sin(a)$, $\sin(\beta)=\sin(b)$ and $\sin(\gamma)=\sin(c)$. \end{proof}
\begin{lem} The point $(\frac\pi3,\frac\pi3,\frac\pi3)$ is a stationary
repelling point of the map $g$.\end{lem}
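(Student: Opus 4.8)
The plan is to repeat, for $g$, the linearization argument already used for $f$ in Lemma~2.4. First I would confirm that $(\frac\pi3,\frac\pi3,\frac\pi3)$ really is a fixed point of $g$: at the equilateral triangle the exterior angles are $a=b=c=\frac{2\pi}{3}$, so $\sin a=\sin b=\sin c$ and Lemma~3.1 gives $a'=b'=c'=\frac{2\pi}{3}$; hence $g(p)=(\pi-\frac{2\pi}{3},\pi-\frac{2\pi}{3},\pi-\frac{2\pi}{3})=(\frac\pi3,\frac\pi3,\frac\pi3)$.

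Next I would compute the differential at this point. Put $\alpha=\frac\pi3+x\varepsilon$, $\beta=\frac\pi3+y\varepsilon$, $\gamma=\frac\pi3+z\varepsilon$ with $x+y+z=0$ and $x^2+y^2+z^2=1$; the corresponding exterior angles are $a=\frac{2\pi}{3}-x\varepsilon$, $b=\frac{2\pi}{3}-y\varepsilon$, $c=\frac{2\pi}{3}-z\varepsilon$. Since $\cos\frac{2\pi}{3}=-\frac12$, a first-order expansion gives $\sin a=\frac{\sqrt3}{2}+\frac12 x\varepsilon+O(\varepsilon^2)$, and likewise for $b,c$; the constraint $x+y+z=0$ kills the linear term of the denominator, so $\sin a+\sin b+\sin c=\frac{3\sqrt3}{2}+O(\varepsilon^2)$. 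Substituting into Lemma~3.1,
$$a'=\frac{2\pi\left(\frac{\sqrt3}{2}+\frac12 x\varepsilon\right)}{\frac{3\sqrt3}{2}}+O(\varepsilon^2)=\frac{2\pi}{3}+\frac{2\pi}{3\sqrt3}\,x\varepsilon+O(\varepsilon^2),$$
so the new interior angle is $\pi-a'=\frac\pi3-\frac{2\pi}{3\sqrt3}\,x\varepsilon+O(\varepsilon^2)$, with the analogous expressions for the other two angles. Hence, in the tangent plane $\{x+y+z=0\}$, the differential of $g$ at $(\frac\pi3,\frac\pi3,\frac\pi3)$ is multiplication by $-\frac{2\pi}{3\sqrt3}$.

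It then remains only to note that $\frac{2\pi}{3\sqrt3}>1$ — equivalently $2\pi>3\sqrt3$, i.e. $6.28\ldots>5.19\ldots$ — so every nonzero tangent direction is expanded and the fixed point is repelling. (This factor is exactly twice the factor $\frac{\pi}{3\sqrt3}<1$ obtained for $f$, which explains why the same stationary point is attracting for $f$ and repelling for $g$.) I do not foresee any real obstacle: the whole argument is a routine first-order expansion, and the only bookkeeping to watch is the pair of sign changes coming from passing to exterior angles ($a=\pi-\alpha$) and back to interior angles ($\pi-a'$), together with the use of $x+y+z=0$ to discard the first-order term in the denominator.
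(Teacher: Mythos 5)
Your proposal is correct and follows essentially the same route as the paper: a first-order expansion of the sine formula of Lemma~3.1 around the equilateral point, using $x+y+z=0$ to kill the linear term in the denominator, yielding the scalar differential $-\frac{2\pi}{3\sqrt3}$ of modulus greater than $1$. The only cosmetic difference is that you perturb the interior angles and convert, while the paper works directly in exterior-angle coordinates; the eigenvalue is the same.
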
 \begin{proof} Let
$$a=\frac{2\pi}{3}+x\varepsilon,\quad b=\frac{2\pi}{3}+
y\varepsilon,\quad c=\frac{2\pi}{3}+z\varepsilon,\quad x+y+z=0,
\quad x^2+y^2+z^2=1.$$ Then in the first approximation
$$a'=\frac{2\pi}{3}-\frac{2\pi}{3\sqrt 3}\cdot x\varepsilon,\quad
b'=\frac{2\pi}{3}-\frac{2\pi}{3\sqrt 3}\cdot y\varepsilon,\quad
c'=\frac{2\pi}{3}-\frac{2\pi}{3\sqrt 3}\cdot z\varepsilon.$$ It remains to
note that $\frac{2\pi}{3\sqrt 3}>1$.\end{proof}  \begin{lem}
$a'\geqslant b'\geqslant c'$. \end{lem}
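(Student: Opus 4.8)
The plan is to reduce everything to an inequality between sines. By Lemma 3.1 the three numbers $a',b',c'$ equal $\sin(a),\sin(b),\sin(c)$ multiplied by the common positive factor $2\pi/(\sin(a)+\sin(b)+\sin(c))$, so the asserted chain $a'\geqslant b'\geqslant c'$ is equivalent to $\sin(a)\geqslant\sin(b)\geqslant\sin(c)$. Hence it is enough to prove the following elementary fact and apply it to the pairs $(a,b)$ and $(b,c)$: if $x$ and $y$ are values of exterior angles of a (non-degenerate) triangle with $x\leqslant y$, then $\sin(x)\geqslant\sin(y)$.

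First I would record the two features of exterior angles that make this work. Since the interior angles are strictly positive and sum to $\pi$, each exterior angle lies in $(0,\pi)$ and $a+b+c=2\pi$; therefore the sum of any two of them strictly exceeds $\pi$ (for instance $x+y=2\pi-(\text{third angle})>\pi$). So the statement to prove is purely trigonometric: \emph{if $x,y\in(0,\pi)$, $x\leqslant y$ and $x+y>\pi$, then $\sin(x)\geqslant\sin(y)$.}

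For the trigonometric step I would use that on $[0,\pi]$ one has $\sin(t)=\cos\bigl(\,|t-\tfrac{\pi}{2}|\,\bigr)$, with the argument lying in $[0,\tfrac{\pi}{2}]$, where $\cos$ is decreasing; thus it suffices to check $|x-\tfrac{\pi}{2}|\leqslant|y-\tfrac{\pi}{2}|$. This is immediate: from $x\leqslant y$ we get $x-\tfrac{\pi}{2}\leqslant y-\tfrac{\pi}{2}\leqslant|y-\tfrac{\pi}{2}|$, and from $x+y>\pi$ we get $\tfrac{\pi}{2}-x<y-\tfrac{\pi}{2}\leqslant|y-\tfrac{\pi}{2}|$; since $|x-\tfrac{\pi}{2}|$ is the larger of $x-\tfrac{\pi}{2}$ and $\tfrac{\pi}{2}-x$, the inequality follows. (One can equally well argue by cases: if $x\leqslant\tfrac{\pi}{2}$ then $\pi-y\leqslant x\leqslant\tfrac{\pi}{2}$ and monotonicity of $\sin$ on $[0,\tfrac{\pi}{2}]$ gives $\sin(x)\geqslant\sin(\pi-y)=\sin(y)$; if $x>\tfrac{\pi}{2}$ then $x,y\in(\tfrac{\pi}{2},\pi)$ and $\sin$ is decreasing there.)

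I do not expect a genuine obstacle; the only point requiring care is that one must not assume $x$ lies on a fixed side of $\tfrac{\pi}{2}$ --- the exterior angle $a$ can be either acute or obtuse --- which is precisely why the ``distance to $\tfrac{\pi}{2}$'' reformulation (or the case split) is used rather than plain monotonicity. Finally, tracking equalities, $\sin(x)=\sin(y)$ with $x\leqslant y$ forces $x=y$ or $x+y=\pi$; the latter would make the remaining exterior angle equal to $\pi$, i.e. a degenerate triangle, so for $p\in M$ one has $a'=b'\iff a=b$ and $b'=c'\iff b=c$.
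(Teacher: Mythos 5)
Your proof is correct: the reduction via Lemma 3.1 to $\sin(a)\geqslant\sin(b)\geqslant\sin(c)$ is legitimate (the common factor is positive), the two properties you extract from exterior angles ($x,y\in(0,\pi)$ and $x+y>\pi$, since any two exterior angles sum to $2\pi$ minus the third) are exactly what is needed, and the ``distance to $\tfrac\pi2$'' argument, or the case split, settles the trigonometric claim. The paper, however, disposes of the lemma in one line by passing back to interior angles: $a\leqslant b\leqslant c$ is equivalent to $\alpha\geqslant\beta\geqslant\gamma$, and since $a',b',c'$ are (up to normalization) the edges opposite $\alpha,\beta,\gamma$, the classical fact that the bigger edge lies opposite the bigger angle --- already invoked at the start of Section 2 --- gives $a'\geqslant b'\geqslant c'$ directly, with no use of the sine formula. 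So your route is genuinely different in flavor: you essentially re-derive that Euclidean side--angle comparison from the law of sines, which makes the argument self-contained and, as a bonus, identifies the equality cases precisely ($a'=b'$ iff $a=b$ for non-degenerate triangles), whereas the paper's proof is shorter because it simply leans on the known geometric fact. Both are sound; yours trades brevity for independence from that fact and for the explicit strictness analysis.
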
 \begin{proof}
$a<b<c\Leftrightarrow\alpha>\beta>\gamma\Leftrightarrow a'>b'>c'$.
\end{proof}

\section{Barycentric coordinates}
\pn Let us consider an equilateral triangle $\triangle ABC$ and
define a barycentric coordinates $a,b,c$, $a+b+c=2\pi$. As the
value of an exterior angle is $<\pi$, then we will work with
triangle $\triangle A_1B_1C_1$, where $A_1B_1,C_1$ are midpoints
of $BC, AC$ and $AB$, respectively.
\[\begin{picture}(180,150) \put(10,15){\line(1,0){160}}
\put(10,15){\line(2,3){80}} \put(170,15){\line(-2,3){80}}
\put(50,75){\line(1,0){80}} \put(50,75){\line(2,-3){40}}
\put(90,15){\line(2,3){40}} \put(2,10){\small A}
\put(174,10){\small B} \put(87,139){\small C}
\put(87,5){\scriptsize $C_1$} \put(135,73){\scriptsize $A_1$}
\put(38,73){\scriptsize $B_1$} \qbezier[60](10,15)(70,45)(130,75)
\qbezier[60](50,75)(110,45)(170,15)
\qbezier[60](90,130)(90,75)(90,15) \put(82,60){\scriptsize O}
\put(93,78){\scriptsize $C_2$} \put(115,45){\scriptsize $B_2$}
\put(57,45){\scriptsize $A_2$}
\end{picture}\] \begin{center}{\large Figure 4.1}\end{center} \pmn
Here points $A_1,B_1,C_1$ have coordinates $(0,\pi,\pi)$, $(\pi.0,\pi)$
and $(\pi,\pi,0)$, respectively. And points $A_2,B_2,C_2$ have
coordinates $(\pi,\frac\pi2,\frac\pi2)$, $(\frac\pi2,\pi,\frac\pi2)$
and $(\frac\pi2,\frac\pi2,\pi)$, respectively. The point $O$ --- the
center of $ABC$ has coordinates $(\frac{2\pi}{3},\frac{2\pi}{3},
\frac{2\pi}{3})$. \pmn $g$ maps
\begin{itemize} \item the triangle $A_1OC_2$ onto the triangle $A_2OC_1$:
$A_1\to A_2$, $C_2\to C_1$, $O\to O$ (and back); \item the
triangle $A_2OB_1$ onto the triangle $A_1OB_2$: $A_2\to A_1$,
$B_1\to B_2$, $O\to O$ (and back); \item the triangle $B_1OC_2$
onto the triangle $B_2OC_1$: $B_1\to B_2$, $C_2\to C_1$, $O\to O$
(and back). \end{itemize}  In what follows we will always assume
that $a\leqslant b$ and $a\leqslant c$. Let
$g(g(a,b,c))=(a'',b'',c'')$. Our aim is to prove that $a''<a$.
\pmn  Let
$$I_t=\{(a,b,c)\in \triangle A_1B_1C_1\,:\, a=t, t\leqslant \frac\pi 2\}
\quad\text{and}\quad J_t=\{(a,b,c)\in\triangle A_1B_1C_1\,:\, a=t,
\frac{2\pi}{3}>t>\frac\pi2\}.$$
\[\begin{picture}(400,170) \put(10,35){\line(1,0){160}}
\put(10,35){\line(2,3){80}} \put(170,35){\line(-2,3){80}}
\put(50,95){\line(1,0){80}} \put(50,95){\line(2,-3){40}}
\put(90,35){\line(2,3){40}} \put(2,30){\scriptsize $B_1$}
\put(174,30){\scriptsize $C_1$} \put(87,159){\scriptsize $A_1$}
\put(87,25){\scriptsize $A_2$} \put(135,93){\scriptsize $B_2$}
\put(37,93){\scriptsize $C_2$} \qbezier[60](10,35)(70,65)(130,95)
\qbezier[60](50,95)(110,65)(170,35)
\qbezier[60](90,150)(90,95)(90,35) \put(83,65){\scriptsize O}

\put(230,35){\line(1,0){160}} \put(230,35){\line(2,3){80}}
\put(390,35){\line(-2,3){80}} \put(270,95){\line(1,0){80}}
\put(270,95){\line(2,-3){40}} \put(310,35){\line(2,3){40}}
\put(222,30){\scriptsize $B_1$} \put(394,30){\scriptsize $C_1$}
\put(307,159){\scriptsize $A_1$} \put(307,25){\scriptsize $A_2$}
\put(355,93){\scriptsize $B_2$} \put(257,93){\scriptsize $C_2$}
\qbezier[60](230,35)(290,65)(350,95)
\qbezier[60](270,95)(330,65)(390,35)
\qbezier[60](310,150)(310,95)(310,35) \put(303,65){\scriptsize O}

\linethickness{0.7mm} \put(70,125){\line(1,0){40}}
\put(62,124){\scriptsize D} \put(114,124){\scriptsize E}
\put(290,85){\line(1,0){40}} \put(286,77){\scriptsize K}
\put(330,77){\scriptsize L}

\put(80,5){set $I_t$} \put(300,5){set $J_t$}
\put(92,128){\scriptsize F} \put(312,87){\scriptsize M}
\end{picture}\]
\begin{center}{\large Figure 4.2}\end{center}
\pmn Coordinates of points $D$, $E$ and $F$ are $(t,\pi-t,\pi)$,
$(t,\pi,\pi-t)$ and $(t,\pi-\frac t2,\pi-\frac t2)$, respectively.
Coordinates of points $K$, $L$ and $M$ are $(t,t,2\pi-2t)$,
$(t,2\pi-2t,t)$ and $(t,\pi-\frac t2,\pi-\frac t2)$, respectively.
\pmn We have
\[\begin{picture}(400,170) \put(10,35){\line(1,0){160}}
\put(10,35){\line(2,3){80}} \put(170,35){\line(-2,3){80}}
\put(50,95){\line(1,0){80}} \put(50,95){\line(2,-3){40}}
\put(90,35){\line(2,3){40}} \put(2,30){\scriptsize $B_1$}
\put(174,30){\scriptsize $C_1$} \put(87,159){\scriptsize $A_1$}
\put(87,25){\scriptsize $A_2$} \put(135,93){\scriptsize $B_2$}
\put(37,93){\scriptsize $C_2$} \qbezier[60](10,35)(70,65)(130,95)
\qbezier[60](50,95)(110,65)(170,35)
\qbezier[60](90,150)(90,95)(90,35) \put(83,65){\scriptsize O}

\put(230,35){\line(1,0){160}} \put(230,35){\line(2,3){80}}
\put(390,35){\line(-2,3){80}} \put(270,95){\line(1,0){80}}
\put(270,95){\line(2,-3){40}} \put(310,35){\line(2,3){40}}
\put(222,30){\scriptsize $B_1$} \put(394,30){\scriptsize $C_1$}
\put(307,159){\scriptsize $A_1$} \put(307,25){\scriptsize $A_2$}
\put(355,93){\scriptsize $B_2$} \put(257,93){\scriptsize $C_2$}
\qbezier[60](230,35)(290,65)(350,95)
\qbezier[60](270,95)(330,65)(390,35)
\qbezier[60](310,150)(310,95)(310,35) \put(303,65){\scriptsize O}

\put(83,5){$g(I_t)$} \put(303,5){$g(J_t)$} \put(93,56){\scriptsize
G} \put(261,55){\scriptsize Q} \put(354,55){\scriptsize P}
\put(312,52){\scriptsize N} \linethickness{0.7mm}
\qbezier(10,35)(90,70)(170,35) \qbezier(270,55)(310,65)(350,55)
\put(180,0){\large Figure 4.3}
\end{picture}\] \pmn Here $g(F)=G$ and
$$G=\left(\dfrac{2\pi\cdot\cos\frac t2}{\cos\frac t2+1},\,
\dfrac{\pi}{\cos\frac t2+1},\,\dfrac{\pi}{\cos\frac
t2+1}\right)=(2s,\pi-s,\pi-s),\,\text{ where }\,
s=\dfrac{\pi\cdot\cos\frac t2}{\cos\frac t2+1}\,.$$ Then $g(K)=P$,
$g(L)=Q$, $g(M)=N$ and
$$P=\left(\frac{\pi}{1-\cos(t)},\,\frac{\pi}{1-\cos(t)},
-\frac{2\pi\cdot\cos(t)}{1-\cos(t)}\right),\quad Q=\left(
\frac{\pi}{1-\cos(t)},\,-\frac{2\pi\cdot\cos(t)}{1-\cos(t)},\,
\frac{\pi}{1-\cos(t)}\right)\,.$$ \begin{rem} When $t=\frac\pi2$
(i.e. when $DE$ coincides with $B_2C_2$), then altitudes $B_1B_2$
and $C_1C_2$ are tangent to the arc $B_1GC_1$ at points $B_1$ and
$C_1$. \end{rem}
\[\begin{picture}(400,170) \put(10,35){\line(1,0){160}}
\put(10,35){\line(2,3){80}} \put(170,35){\line(-2,3){80}}
\put(50,95){\line(1,0){80}} \put(50,95){\line(2,-3){40}}
\put(90,35){\line(2,3){40}} \put(2,30){\scriptsize $B_1$}
\put(174,30){\scriptsize $C_1$} \put(87,159){\scriptsize $A_1$}
\put(87,25){\scriptsize $A_2$} \put(135,93){\scriptsize $B_2$}
\put(37,93){\scriptsize $C_2$} \qbezier[60](10,35)(70,65)(130,95)
\qbezier[60](50,95)(110,65)(170,35)
\qbezier[60](90,150)(90,95)(90,35) \put(83,65){\scriptsize O}

\put(230,35){\line(1,0){160}} \put(230,35){\line(2,3){80}}
\put(390,35){\line(-2,3){80}} \put(270,95){\line(1,0){80}}
\put(270,95){\line(2,-3){40}} \put(310,35){\line(2,3){40}}
\put(222,30){\scriptsize $B_1$} \put(394,30){\scriptsize $C_1$}
\put(307,159){\scriptsize $A_1$} \put(307,25){\scriptsize $A_2$}
\put(355,93){\scriptsize $B_2$} \put(257,93){\scriptsize $C_2$}
\qbezier[60](230,35)(290,65)(350,95)
\qbezier[60](270,95)(330,65)(390,35)
\qbezier[60](310,150)(310,95)(310,35) \put(303,65){\scriptsize O}

\linethickness{0.7mm} \qbezier(70,125)(80,135)(90,135)
\qbezier(90,135)(100,135)(110,125) \put(62,124){\scriptsize U}
\put(114,124){\scriptsize V} \put(92,125){\tiny W}
\qbezier(298,83)(310,90)(322,83) \put(292,77){\scriptsize X}
\put(325,77){\scriptsize Y} \put(312,88){\scriptsize Z}
\put(80,5){$g(g(I_t))$} \put(300,5){$g(g(J_t))$}
\put(170,0){\large Figure 4.4} \end{picture}\] We will denote by
$GG(t)$ --- the first coordinate of the point $W=g(g(F))$:
$$GG(t)=\frac{2\pi\cdot\cos(s)}{\cos(s)+1}=
\dfrac{2\pi\cdot\cos\dfrac{\pi\cdot\cos\frac t2}{\cos\frac t2+1}}
{\overset{\smallskip}{\cos\dfrac{\pi\cdot\cos\frac
t2}{\overset{\smallskip}{\cos\frac t2+1}}+1}}\,.$$ Formulas for
coordinates of points $N=g(M)$ and $Z=g(g(M))$ are the same, only
here $\frac\pi2\leqslant t\leqslant\frac{2\pi}{3}$. Thus the
function $GG$ is defined in the segment $[0,\frac{2\pi}{3}]$. In
the figure below the plot of $GG(t)$ is presented.
\[\begin{picture}(140,140) \put(10,30){\vector(1,0){120}}
\put(125,22){\small t} \put(15,15){\vector(0,1){120}}
\put(105,30){\circle*{2}} \put(15,120){\circle*{2}}
\put(100,15){$\frac{2\pi}{3}$} \put(0,115){$\frac{2\pi}{3}$}
\qbezier[50](15,30)(60,75)(105,120)
\qbezier(15,30)(45,40)(105,120) \put(40,0){Figure 4.5}
\end{picture}\]
\begin{lem}
$GG(t)<t$.\end{lem}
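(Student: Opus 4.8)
The plan is to change coordinates so that the relevant second iterate becomes \emph{convex}, after which the inequality is immediate.

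\emph{Reduction.} The closed form $GG(t)=\dfrac{2\pi\cos s}{\cos s+1}$, $s=\dfrac{\pi\cos(t/2)}{\cos(t/2)+1}$, obtained above can be rewritten via $\dfrac{\cos\theta}{\cos\theta+1}=\tfrac12\bigl(1-\tan^2\tfrac\theta2\bigr)$. Setting $\rho(x)=\tfrac\pi2\bigl(1-\tan^2\tfrac x2\bigr)$ on $[0,\tfrac\pi2]$ one gets $s=\rho(t/2)$ and $GG(t)=2\rho\bigl(\rho(t/2)\bigr)$, so $GG(t)<t$ is equivalent to $\rho(\rho(w))<w$ for $w:=t/2\in(0,\tfrac\pi3)$. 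Here $\rho$ is a strictly decreasing, strictly concave bijection of $[0,\tfrac\pi2]$ with $\rho(0)=\tfrac\pi2$, $\rho(\tfrac\pi2)=0$, unique fixed point $\tfrac\pi3$, and $\rho'(\tfrac\pi3)=-\tfrac{2\pi}{3\sqrt3}$ (absolute value $>1$, cf.\ Lemma 4.2). Now apply the increasing substitution $\xi=\tan^2\tfrac w2$, which carries $[0,\tfrac\pi2]$ onto $[0,1]$ and $\tfrac\pi3$ to $\tfrac13$: a direct computation gives $\tan^2\tfrac{\rho(w)}2=F(\xi)$, where
\[
F(\xi)=\tan^2\!\Bigl(\tfrac\pi4(1-\xi)\Bigr),
\]
and $\rho(\rho(w))<w$ on $(0,\tfrac\pi3)$ becomes $F(F(\xi))<\xi$ on $(0,\tfrac13)$. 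Writing $u=\tfrac\pi4(1-\xi)$ one finds $F'(\xi)=-\tfrac\pi2\tan u\sec^2u$ and $F''(\xi)=\tfrac{\pi^2}{8}\sec^2u\,(1+3\tan^2u)$, so $F$ is strictly decreasing and \emph{strictly convex} on $[0,1]$, with $F(0)=1$, $F(1)=0$, fixed point $\tfrac13$, $F(F(0))=0$ and $F(F(\tfrac13))=\tfrac13$.

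\emph{The key claim.} I claim $F\circ F$ is strictly convex on $[0,\tfrac13]$. Granting this, $F\circ F$ agrees with the linear function $\xi$ at both endpoints $0$ and $\tfrac13$, so strict convexity forces $F(F(\xi))<\xi$ on $(0,\tfrac13)$; unwinding the substitutions yields $GG(t)<t$ for $t\in(0,\tfrac{2\pi}3)$ (with equality at $t=0$ and $t=\tfrac{2\pi}3$). To prove the claim I would expand $(F\circ F)''=F''(F)\cdot(F')^2+F'(F)\cdot F''$ and substitute the formulas above. With $\tau:=\tan^2u\in[\tfrac13,1]$ (so $\sec^2u=1+\tau$, $1+3\tan^2u=1+3\tau$) and $v:=\tfrac\pi4(1-\tau)\in[0,\tfrac\pi6]$, cancelling positive factors reduces $(F\circ F)''(\xi)>0$ to
\[
\tfrac\pi2\,(1+3\tan^2v)\,\tau(1+\tau)\;>\;\tan v\,(1+3\tau),\qquad v=\tfrac\pi4(1-\tau),\quad \tfrac13\le\tau\le1 .
\]
On $[0,\tfrac\pi6]$ one has $v\le\tan v\le\tfrac{\sqrt3}{2}(1-\tau)$ (convexity of $\tan$ together with its chord on this interval) and $\tan^2v\ge v^2=\tfrac{\pi^2}{16}(1-\tau)^2$; hence the displayed inequality follows from the purely polynomial inequality
\[
\pi\Bigl(1+\tfrac{3\pi^2}{16}(1-\tau)^2\Bigr)\tau(1+\tau)\;\ge\;\sqrt3\,(1-\tau)(1+3\tau),\qquad \tfrac13\le\tau\le1 ,
\]
a degree-$4$ inequality that is elementary to verify: both sides are nonnegative, the left side dominates with room to spare for $\tau$ away from $\tfrac13$, and the quadratic correction term on the left is exactly what keeps it positive near $\tau=\tfrac13$.

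\emph{Main obstacle.} The reduction is routine; the content sits entirely in the key claim, that is, in the last polynomial inequality. The delicate point is that near $\tau=\tfrac13$ — which corresponds to $\xi\to\tfrac13$, the repelling fixed point — the margin is thin, so the elementary bounds on $\tan v$ must be kept tight enough there; the combination of the chord bound for $\tan v$ with the quadratic lower bound for $\tan^2v$ above is just adequate. Alternatively, one may note that a neighbourhood of the fixed point is already handled by $(F\circ F)''(\tfrac13)=F''(\tfrac13)\,m(m+1)>0$, where $m=F'(\tfrac13)<-1$ by the repelling condition $|\rho'(\tfrac\pi3)|>1$, and then verify the polynomial bound only on the complementary subinterval of $[\tfrac13,1]$.
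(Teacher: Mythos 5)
Your argument is correct, and it takes a genuinely different route from the paper. The paper stays in the variable $t$: it computes $GG'$ and $GG''$ directly, asserts $GG'>0$, and reduces $GG(t)<t$ to convexity of $GG$ between the fixed endpoints $GG(0)=0$, $GG(2\pi/3)=2\pi/3$, leaving the positivity of the (rather messy) numerator of $GG''$ as ``a technical task.'' You instead conjugate by the increasing substitution $\xi=\tan^2(t/4)$, under which the one--variable map becomes the very clean $F(\xi)=\tan^2\bigl(\tfrac{\pi}{4}(1-\xi)\bigr)$, and you run the same endpoint--plus--convexity principle for $F\circ F$ on $[0,\tfrac13]$. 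Two remarks on this. First, convexity is not preserved by your nonlinear substitution, so you are not proving the paper's convexity claim about $GG$ itself; but you do not need to, since the increasing conjugacy transports the order relation $GG(t)<t$ exactly, and this is precisely what the lemma asserts. Second, the payoff of your change of variables is real: $F'(\xi)=-\tfrac{\pi}{2}\tan u\sec^2u$ and $F''(\xi)=\tfrac{\pi^2}{8}\sec^2u\,(1+3\tan^2u)$ are simple, your cancellation leading to $\tfrac{\pi}{2}(1+3\tan^2v)\tau(1+\tau)>\tan v\,(1+3\tau)$ on $\tfrac13\le\tau\le1$ is correct (I checked it), and the chord bound $\tan v\le\tfrac{\sqrt3}{2}(1-\tau)$ together with $\tan v\ge v$ legitimately reduces everything to your displayed quartic inequality. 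So where the paper ends with an unverified trigonometric positivity claim, you end with an explicit polynomial inequality with constants $\pi,\sqrt3$ --- a strictly more tractable endgame.

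The one thing you should not leave as an assertion is that final quartic inequality; it is the exact analogue of the step the paper also skips, and writing it out is what would make your proof complete where the paper's is only a sketch. It does close in two lines: put $\psi(\tau)=\pi\bigl(1+\tfrac{3\pi^2}{16}(1-\tau)^2\bigr)\tau(1+\tau)-\sqrt3(1-\tau)(1+3\tau)$. Then $\psi(\tfrac13)=\tfrac{4\pi}{9}+\tfrac{\pi^3}{27}-\tfrac{4\sqrt3}{3}\approx0.235>0$, and
\[
\psi'(\tau)=\pi(1+2\tau)+\tfrac{3\pi^3}{16}(1-\tau)(1-\tau-4\tau^2)+\sqrt3(6\tau-2).
\]
On $[\tfrac13,1]$ the last term is nonnegative, $\pi(1+2\tau)\ge\tfrac{5\pi}{3}$, and since $4\tau^2\le4\tau$ one has $(1-\tau)(4\tau^2+\tau-1)\le(1-\tau)(5\tau-1)\le\tfrac45$ by AM--GM, so the middle term is at least $-\tfrac{3\pi^3}{20}$, which is smaller in absolute value than $\tfrac{5\pi}{3}$ (equivalently $\pi^2<\tfrac{100}{9}$); hence $\psi'>0$ and $\psi>0$ on all of $[\tfrac13,1]$. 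Strictness of $(F\circ F)''>0$ at $\tau=\tfrac13$ comes from $\tan^2v>v^2$ there. With this added, your proof is complete and, in my view, cleaner than the paper's intended one.
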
 \pn \emph{A sketch of the proof.} As $GG(0)=0$
and $GG(\frac{2\pi}{3})=\frac{2\pi}{3}$ it is enough to prove,
that $GG$ is a strictly increasing function and its plot is
downward convex. We have
$$GG'=\left(\frac{2\pi\cdot\cos(s)}{\cos(s)+1}\right)'=
-2\pi\cdot \frac{\sin(s)\cdot s'}{(\cos(s)+1)^2}=
-2\pi\cdot\frac{\sin(s)}{(\cos(s)+1)^2}\cdot
\frac{-\frac\pi2\cdot\sin\frac t2}{(\cos\frac t2+1)^2}>0$$ Then
$$GG''=-2\pi\cdot\frac{[\cos(s)\cdot(s')^2+\sin(s)\cdot
s'']\cdot(\cos(s)+1)+2\cdot\sin^2(s)\cdot(s')^2}
{(\cos(s)+1)^3}\,.$$ The numerator of this expression is
$$2\pi\cdot(1+\cos(s))\cdot[-(s')^2\cdot(2-\cos(s)-\sin(s)
\cdot s'']\,.$$ And the expression in square brackets is
$$
-\pi\cdot(2-\cos(s))\cdot\left(1-\cos\frac t2\right)+\sin(s)\cdot
\left(2-\cos\frac t2\right)\cdot\left(1+\cos\frac t2\right)\,.$$
The proof of the positivity of the above expression is a technical
task. \qed \pmn The first coordinate of the point $U$ is $\frac\pi
2\cdot (1-\cos(t))$. In the figure below are presented plots of
the first coordinates of points $U$ (the upper curve) and $W$ (the
lower curve) for $0<a\leqslant\frac\pi 2$.
\[\begin{picture}(150,150) \put(5,25){\vector(1,0){130}}
\put(10,20){\vector(0,1){120}} \put(110,25){\circle*{2}}
\put(10,125){\circle*{2}} \put(130,17){\scriptsize t}
\put(107,13){\small $\frac\pi 2$} \put(0,122){\small $\frac\pi 2$}
\qbezier[50](10,25)(60,75)(110,125) \qbezier(10,25)(60,25)(110,90)
\qbezier(10,25)(40,25)(110,125)  \put(40,0){Figure 4.6}
\end{picture}\] The first coordinate of the point $X$ is
$$\dfrac{\pi}{\overset{\vspace{1mm}}{1-\cos\dfrac{\pi}{1-\cos(t)}}}\,,
\quad \frac\pi 2<t\leqslant\frac{2\pi}{3}\,.$$ In the figure below
are presented plots of the first coordinates of points $X$ (the
upper curve) and $Z$ (the lower curve).
\[\begin{picture}(150,150) \put(5,25){\vector(1,0){150}}
\put(150,15){\small t} \put(15,15){\vector(0,1){130}}
\put(135,25){\circle*{2}} \put(130,12){\small $\frac{2\pi}{3}$}
\put(17,15){\small $\frac\pi 2$} \put(15,25){\circle*{2}}
\put(15,65){\circle*{2}} \put(5,83){\small $\frac\pi 2$}
\put(15,125){\circle*{2}} \put(2,123){\small $\frac{2\pi}{3}$}
\qbezier[60](15,85)(75,105)(135,125) \put(15,65){\line(2,1){120}}
\qbezier(15,85)(55,85)(135,125)  \put(50,0){Figure 4.7}
\end{picture}\] \begin{theor} Let $p\in M$, then the sequence
$\{p,g(p),g(g(p)),\ldots\}$ converges to the point $(0,0,\pi)$.
This point does not belong to the set $M$, but belong to its
boundary.\end{theor}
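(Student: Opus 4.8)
The plan is to reduce everything to the inequality $a''<a$, where $a$ is the smallest exterior angle of the current triangle and $(a'',b'',c'')=g(g(a,b,c))$. (The equilateral triangle $O=(\tfrac\pi3,\tfrac\pi3,\tfrac\pi3)$ satisfies $g(O)=O$, so it must be excluded; I assume $p\neq O$ throughout.) Since the smallest exterior angle of $g(g(q))$ is at most its first coordinate $a''$, the inequality $a''<a$ shows that the smallest exterior angle strictly decreases under $g\circ g$. If one then shows this decreasing quantity tends to $0$, the largest interior angle of $g^{2n}(p)$ tends to $\pi$, which forces the remaining two interior angles to $0$; hence $g^{2n}(p)\to(0,0,\pi)$, and applying the same to $g(p)$ gives convergence of the whole sequence.

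To prove $a''<a$ I would work entirely in the exterior-angle barycentric picture of Section 4. A triangle whose smallest exterior angle equals $t$ lies on the segment $I_t$ (if $t\leqslant\frac\pi2$, with endpoints $D,E$ on $\partial M$) or on $J_t$ (if $\frac\pi2<t<\frac{2\pi}3$), and $t<\frac{2\pi}3$ unless the triangle is $O$. Using Lemma 4.1 and the piecewise description of $g$ on the six subtriangles, one computes the image arcs $g(I_t)$ and $g(g(I_t))$ of Figures 4.3--4.4: each is a symmetric arc whose apex is $g(g(F))$ (respectively $g(g(M))$) with first coordinate $GG(t)$, and whose endpoints have first coordinate $\frac\pi2(1-\cos t)$ (respectively $\frac{\pi}{1-\cos\frac{\pi}{1-\cos t}}$). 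The point to check is that along each half of the arc the first coordinate is monotone, so that for every point of the arc it lies between the apex value and the endpoint value; then $a''\leqslant\max\{GG(t),\frac\pi2(1-\cos t)\}$ for $t\leqslant\frac\pi2$ and $a''\leqslant\max\{GG(t),\frac{\pi}{1-\cos\frac{\pi}{1-\cos t}}\}$ for $t>\frac\pi2$. Lemma 4.4 gives $GG(t)<t$, and the two elementary one-variable estimates $\frac\pi2(1-\cos t)<t$ on $(0,\frac\pi2)$ and $\frac{\pi}{1-\cos\frac{\pi}{1-\cos t}}<t$ on $(\frac\pi2,\frac{2\pi}3)$ (the content of Figures 4.6 and 4.7: in each case a short concavity argument, with equality only at the endpoints of the interval) complete this step.

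For the convergence: $g$ is a bijection of $M$ fixing only $O$, so for $p\neq O$ no iterate equals $O$ and the smallest exterior angle $t_n$ of $g^{2n}(p)$ is a strictly decreasing sequence in $(0,\frac{2\pi}3)$, hence converges to some $\ell\geqslant0$; the same holds for the odd iterates. Writing $\varphi$ for the piecewise bounding function just obtained ($\varphi(t)=\max\{GG(t),\frac\pi2(1-\cos t)\}$ for $t\leqslant\frac\pi2$, and $\varphi(t)=\max\{GG(t),\frac{\pi}{1-\cos\frac{\pi}{1-\cos t}}\}$ otherwise), we have $t_{n+1}\leqslant\varphi(t_n)\leqslant t_n$, so $\ell$ is a fixed point of the continuous function $\varphi$; since $\varphi(t)<t$ on $(0,\frac{2\pi}3)\setminus\{\frac\pi2\}$, this leaves $\ell\in\{0,\frac\pi2\}$. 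If $\ell=0$, then by the first paragraph $g^{2n}(p)\to(0,0,\pi)$ and, via $g(p)$, also $g^{2n+1}(p)\to(0,0,\pi)$, so the whole sequence converges to $(0,0,\pi)$.

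The remaining case $\ell=\frac\pi2$ is where I expect the real difficulty. It corresponds to the orbit accumulating on right triangles, where $\varphi$ has a (neutral) fixed point and where $g\circ g$ need not be continuous: $g$ sends the degenerate right triangle $(\frac\pi2,\frac\pi2,0)$ to the vertex $(0,0,\pi)$, at which $g$ itself is undefined. So the soft argument above does not by itself exclude $\ell=\frac\pi2$, and one must instead study $g$ directly near such boundary right triangles and show the orbit is driven away from them (one expects the two small angles of the nearby triangles to become increasingly unequal under $g\circ g$). This local repulsion analysis, together with the monotonicity-of-the-first-coordinate-along-the-arc claim used in the second step, is the substantive part of the argument; Lemma 4.4 and the two one-variable inequalities are by comparison routine.
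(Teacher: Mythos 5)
Your reduction and your use of Section 4's machinery (the barycentric picture, the sets $I_t$, $J_t$, the apex bound $GG(t)$ of Lemma 4.4 and the endpoint bounds $\frac\pi2(1-\cos t)$ and $\pi\bigl/\bigl(1-\cos\frac{\pi}{1-\cos t}\bigr)$ read off Figures 4.6--4.7) follow the paper's own route; in fact you are more careful than the paper, whose entire proof of the theorem is the assertion that $h=g\circ g$ decreases the least exterior angle and that convergence to $(0,0,\pi)$ follows. Both points you flag are genuinely absent from the paper: nothing controls the first coordinate along the whole arcs $g(g(I_t))$, $g(g(J_t))$ (only at their apexes and endpoints), and nothing addresses the fact that the endpoint bound touches the diagonal at $t=\frac\pi2$, so strict decrease of the least exterior angle does not by itself force the limit $0$.

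The second difficulty you isolate is not a repairable technicality: the case $\ell=\frac\pi2$ actually occurs, so the theorem as stated is false and no ``local repulsion'' argument can close the gap on all of $M$. In exterior-angle coordinates consider the acute isosceles family $K_t=(t,t,2\pi-2t)$, $\frac\pi2<t<\frac{2\pi}3$ (these are exactly the paper's points $K$). Since $g$ preserves equality of two exterior angles, $g\circ g$ maps this family to itself: by the paper's own formula for the first coordinate of $X=g(g(K))$ one has $g(g(K_t))=K_{t'}$ with $t'=\pi\bigl/\bigl(1-\cos\frac{\pi}{1-\cos t}\bigr)$, and for $t\in(\frac\pi2,\frac{2\pi}3)$ one gets $\cos t\in(-\frac12,0)$, hence $\frac{\pi}{1-\cos t}\in(\frac{2\pi}3,\pi)$ and $t'\in(\frac\pi2,\frac{2\pi}3)$ again. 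Thus for such $p$ every even iterate $g^{2n}(p)$ is an acute triangle, so the sequence cannot converge to $(0,0,\pi)$; more precisely $t'-\frac\pi2\sim\frac{\pi^3}8\,(t-\frac\pi2)^2$, so the even iterates converge to the degenerate triangle with interior angles $(\frac\pi2,\frac\pi2,0)$ while the odd iterates tend to $(0,0,\pi)$, and the orbit has two distinct accumulation points. A concrete check: starting from interior angles $(\frac\pi2-0.1,\frac\pi2-0.1,0.2)$, the interior angles of $g^2(p)$ and $g^4(p)$ are approximately $(1.5384,1.5384,0.0648)$ and $(1.5670,1.5670,0.0077)$. (The paper's Example 4.1 uses non-isosceles data, which is why this escapes notice there.) Your expected repulsion is only transversal: near the corner the antisymmetric perturbation of the two smallest exterior angles is expanded by a factor about $\frac\pi2$ under $g\circ g$, so non-isosceles orbits do leave the acute region and then your $I_t$-regime argument applies; but on the invariant isosceles line there is nothing to repel. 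So the correct conclusion of your (and the paper's) approach is a weaker statement, e.g.\ convergence for all $p$ outside this isosceles family, or convergence of the odd-indexed subsequence only; your remaining reservation about monotonicity of the first coordinate along the image arcs is, by comparison, a real but minor omission of the paper.
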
 \begin{proof} We see that the map $h=g(g(p))$
decreases the least exterior angle. Thus the sequence
$\{T,h(T),h(h(T)),\ldots\}$, where $T$ is a triangle, converges to
a "degenerate triangle" with angles $(0,0,\pi)$. On the other
hand, the sequence $\{g(T), h(g(T)), h(h(g(T))),\ldots\}$ also
converges to the same "degenerate triangle" (with another zero
angles). \end{proof}
\begin{ex} Let $T$ be a triangle with exterior angles
$(1,2.3,2\pi-3.3)$. Then triangles
$g(T),g(g(T)),g(g(g(T))),\ldots$ have the following exterior
angles.
\begin{align*} &(3.0300,2.6851,0.5680)\\ &(0.6418,2.5404,3.1008)\\
&(3.1217,2.9489,0.2124)\\ &(0.2953,2.8492,3.1385)\\
&(3.1408,3.1097,0.0324)\\ &(0.0673,3.0742,3.1415)\end{align*} Let
now the exterior angles of $T$ be $(1.9,2.0,2\pi-3.9)$. Here the
sequence of triples of exterior angles is of the form:
\begin{align*} &(2.3377,2.2463,1.6990)\\ &(1.8152,1.9674,2.5004)\\
&(2.4476,2.3267,1.5087)\\ &(1.6990,1.9327,2.6513)\\
&(2.5988,2.4505,1.2338)\\ &(1.5471,1.9091,2.8269)\\
&(2.7886,2.6312,0.8633)\\ &(1.3625,1.9252,2.9954)\\
&(2.9814,2.8579,0.4437)\\ &(1.1532,2.0243,3.1055) \end{align*}
\end{ex}

\section{Supplement}
\pn We will consider plane inscribed-circumscribed quadrangles
(ic-quadrangles). Sums of opposite angles of an ic-quadrangle are
$\pi$ and sums of lengths of opposite edges are equal. Up to
shifts, rotations and dilations such quadrangle is uniquely
defined by its angles and their order in going around the
quadrangle. Two angles of a ic-quadrangle are acute (and they are
adjacent) and two are obtuse (and they are also adjacent). \pmn
Let $\alpha$ and $\beta$ be obtuse angles and
$\alpha\geqslant\beta$.
\[\begin{picture}(140,110) \put(40,10){\line(1,0){60}}
\put(40,10){\line(-1,3){30}} \put(100,10){\line(1,1){30}} \put(10
,100){\line(2,-1){120}} \put(38,2){\scriptsize B}
\put(100,2){\scriptsize A} \put(3,98){\scriptsize C}
\put(133,38){\scriptsize D} \end{picture}\] Here $\alpha$ is the
value of $\angle A$, $\beta$ is the value of $\angle B$, $\angle
C=\pi-\alpha$, $\angle D=\pi-\beta$. Let $r$ be the radius of the
inscribed circle, then
\begin{multline*}|CD|=r\cdot(\tan(\alpha/2)+\tan(\beta/2))\geqslant
|BC|=r\cdot(\tan(\alpha/2)+\cot(\beta/2))\geqslant\\
\geqslant |AB|=r\cdot(\cot(\alpha/2)+\tan(\beta/2))\geqslant
|DA|=r\cdot(\cot(\alpha/2)+\cot(\beta/2)).\end{multline*} If the
perimeter of $ABCD$ is $2\pi$, then
$$|CD|=\dfrac{\pi\cdot\sin(\frac\alpha 2)\cdot\sin(\frac\beta 2)}
{\cos(\frac\alpha 2-\frac\beta 2)}\,,\,|BC|=\dfrac{\pi\cdot
\sin(\frac\alpha 2)\cdot\cos(\frac\beta 2)}{\sin(\frac\alpha 2+
\frac\beta 2)}\,.$$ The map $h$ corresponds to an ic-quadrangle of
perimeter $2\pi$ the ic-quadrangle of perimeter $2\pi$ with angles
numerically equal to edges of initial ic-quadrangle.
\begin{theor} Let $Q$ be an ic-quadrangle of perimeter $2\pi$,
then the sequence $\{Q,h(Q),h(h(Q)),\ldots\}$ converges to a
"degenerate quadrangle" with angles $(0,0,\pi,\pi)$. \end{theor}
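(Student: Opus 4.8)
The plan is to reduce $h$ to an explicit self-map of the two-parameter space of ic-quadrangles and then to imitate the treatment of $g$ in Sections~3--4. First I would introduce coordinates. An ic-quadrangle of perimeter $2\pi$ is determined by its two acute angles; write them as $p\leqslant q$ (so $p=\pi-\alpha$, $q=\pi-\beta$ in the notation above, and $0<p\leqslant q\leqslant\frac\pi2$). Since $Q$ is tangential, its two sums of opposite edges are equal, hence each equals half the perimeter, namely $\pi$; consequently the four edge lengths of $Q$, listed in cyclic order, form a pattern $\{r,s,\pi-r,\pi-s\}$ with $r,s\leqslant\frac\pi2$ and $r$ adjacent to $s$ — exactly the shape of the angle set of an ic-quadrangle, with the two longest edges playing the part of the obtuse angles. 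Feeding $\sin\alpha=\sin(\pi-\alpha)$ and the half-angle identities into the formulas for $|CD|$ and $|BC|$, one gets $h:(p,q)\mapsto(p',q')$ with
\[
p'=\frac{\pi\sin\frac p2\,\sin\frac q2}{\cos\frac{q-p}{2}},\qquad
q'=\frac{\pi\sin\frac p2\,\cos\frac q2}{\sin\frac{p+q}{2}},
\]
and, after the substitution $x=\tan\frac p2$, $y=\tan\frac q2$ (so $0<x\leqslant y\leqslant1$), this collapses to the very simple form
\[
p'=\frac{\pi xy}{1+xy},\qquad q'=\frac{\pi x}{x+y}.
\]
The second form shows $p'<q'<\frac\pi2$ as soon as $p<q<\frac\pi2$, so the set of genuine ic-quadrangles is preserved, while the face $p=0$ is collapsed onto the single point $(0,0)$ — the degenerate quadrangle with angles $(0,0,\pi,\pi)$.

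Next I would determine the fixed points of $h$; they are precisely the square $(\frac\pi2,\frac\pi2)$ and the degenerate point $(0,0)$. Just as for $g$, the square is repelling: the linearization of $h$ at the square is $\frac\pi4$ times the matrix with rows $(1,1)$ and $(1,-1)$, whose eigenvalues $\pm\frac{\pi}{2\sqrt2}$ both have absolute value greater than $1$. In particular no orbit starting at a genuine ic-quadrangle can accumulate at the square.

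The heart of the argument is the analogue of the lemma $GG(t)<t$, which I would formulate as: \emph{the sum $\sigma=p+q$ of the two acute angles strictly decreases under $h^{2}$ at every ic-quadrangle other than the square and the degenerate one.} The mechanism is transparent along two lines: $h$ carries $\{p=q\}$ into $\{q=\frac\pi2\}$ and $\{q=\frac\pi2\}$ back into $\{p=q\}$, acting there by $p\mapsto\pi\sin^{2}\frac p2$ and $p\mapsto\frac{\pi\tan\frac p2}{1+\tan\frac p2}$; the composite $p\mapsto\frac{\pi\tau}{1+\tau}$ with $\tau=\tan\!\big(\frac\pi4(1-\cos p)\big)$ is readily checked to be $<p$ on $(0,\frac\pi2)$. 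For a general ic-quadrangle one writes out $\sigma\circ h^{2}$ from the formulas above and has to show it is smaller than $\sigma$; as with $GG(t)<t$, this boils down to the positivity of an explicit trigonometric expression, and the verification is a technical but elementary task. I expect this inequality to be the main obstacle.

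Granting the lemma, the theorem follows as Theorem~4.1 did. The two subsequences $\sigma_0,\sigma_2,\dots$ and $\sigma_1,\sigma_3,\dots$ are decreasing and bounded below, hence convergent; by the usual argument each of their limits is a value of $\sigma$ at an $h^{2}$-fixed point, so it equals $\pi$ (the square) or $0$ (the degenerate point), and the value $\pi$ is impossible since the square is repelling. Therefore $\sigma_n=p_n+q_n\to 0$, so both acute angles of $h^{n}(Q)$ tend to $0$ and both obtuse angles tend to $\pi$; that is, $h^{n}(Q)$ converges to the degenerate quadrangle with angles $(0,0,\pi,\pi)$.
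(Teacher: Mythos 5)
Your proposal follows essentially the same route as the paper: the paper's entire proof sketch is the single assertion that the sum of the obtuse angles of $h(h(Q))$ exceeds that of $Q$, which is exactly your lemma that $\sigma=p+q$ decreases under $h^{2}$, and like the paper you leave that central inequality as an unproven technical verification. Your explicit formulas $p'=\pi xy/(1+xy)$, $q'=\pi x/(x+y)$ (which I checked) and the repelling linearization at the square with eigenvalues $\pm\pi/(2\sqrt 2)$ are correct and add useful supporting detail, but they do not change the approach.
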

\pmn \emph{Sketch of the proof.} The sum of obtuse angles in the
quadrangle $h(h(Q))$ is strictly greater, than the sum of obtuse
angles in the initial quadrangle $Q$. \qed \begin{rem} The sum of
obtuse angles in $h(Q)$ can be less, than the sum of obtuse angles
in $Q$. For example, if $\alpha=1.85$ and $\beta=1,75$, then
$$\dfrac{Pi\cdot\sin(\frac\alpha 2)\cdot\sin(\frac\beta 2)}
{\cos(\frac{\alpha-\beta}{2}}+\dfrac{Pi\cdot\sin(\frac\alpha
2)\cdot \cos(\frac\beta 2)}{\sin(\frac{\alpha+\beta}{2}}=3.58<
\alpha+\beta=3.6$$\end{rem}

\vspace{5mm}
\end{document}